\newtheorem*{rep@theorem}{\rep@title}
\newcommand{\newreptheorem}[2]{%
	\newenvironment{rep#1}[1]{%
		\def\rep@title{#2 \ref{##1}}%
		\begin{rep@theorem}}%
		{\end{rep@theorem}}}
\newtheorem{theorem}{Theorem}[section]
\newtheorem{lemma}[theorem]{Lemma}
\newtheorem{proposition}[theorem]{Proposition}
\theoremstyle{definition}
\newtheorem{example}[theorem]{Example}
\theoremstyle{remark}
\newtheorem{remark}[theorem]{Remark}
\numberwithin{equation}{section}
\newcommand{\cA}{\mathcal{A}}
\newcommand{\cB}{\mathcal{B}}
\newcommand{\eye}{\mathds{1}}
\newcommand{\N}{\mathbb{N}}
\renewcommand{\tilde}{\widetilde}
\newcommand{\defeq}{\vcentcolon=}
\def\XXint#1#2#3{{\setbox0=\hbox{$#1{#2#3}{\int}$ }
\vcenter{\hbox{$#2#3$ }}\kern-.6\wd0}}
\title{Families of well approximable measures}
\author{S.~Fairchild, M.~Goering, C.~Wei\ss{}}
\date{\today}
\begin{document}
\begin{abstract}
	We provide an algorithm to approximate a finitely supported discrete measure $\mu$ by a measure $\nu_{N}$ corresponding to a set of $N$ points so that the total variation between $\mu$ and $\nu_N$ has an upper bound. As a consequence if $\mu$ is a (finite or infinitely supported) discrete probability measure on $[0,1]^{d}$ with a sufficient decay rate on the weights of each point, then $\mu$ can be approximated by $\nu_N$ with total variation, and hence star-discrepancy, bounded above by $(\log N) N^{-1}$. Our result improves, in the discrete case, recent work by Aistleitner, Bilyk, and Nikolov who show that for any normalized Borel measure $\mu$, there exist finite sets whose star-discrepancy with respect to $\mu$ is at most $(\log N)^{d-\frac{1}{2}} N^{-1}$. Moreover we close a gap in the literature for discrepancy in the case $d=1$ showing both that Lebesgue is indeed the hardest measure to approximate by finite sets and also that all measures without discrete components have the same order of discrepancy as the Lebesgue measure.
\end{abstract}
\maketitle

\section{Introduction}

In \cite{ABN18}, the authors ask whether the Lebesgue measure is the hardest measure to approximate by finite sets. They guess that the answer is \emph{yes} and justify the conjecture because the Lebesgue measure is spread throughout the entire cube $[0,1]^{d}$ and treats all points the same. In this paper, the main notion of approximation used is the total variation. 

\subsection{Total variation}
For two probability measures $\mu, \nu$ the total variation is defined by 
\begin{equation} \label{e:tvd} 
\rho(\mu;\nu) \defeq \sup_{A \in \cB} |\mu(A) - \nu(A)|,
\end{equation}
where $\cB$ is the class of all Borel sets. The total variation generates the strong topology on the set of all probability measures on $[0,1]^d$. Moreover it is widely used in statistics and has direct connections to other important (statistical) notions such as the Hellinger distance, the Kullback-Leibler divergence, the star-discrepancy and the Wasserstein metric, see \cite{Hel09}, \cite{KS51}, \cite{Nie92}, \cite{Vil09}. Exact calculations of the total variation of two arbitrary probability measures can be difficult if not impossible but for many applications approximations suffice. In this paper, we are interested in giving bounds for the total variation between discrete measures and probability measures associated to a finite set $(x_{i})_{i=1}^N$. By a probability measure associated to a finite set we mean a measure given by
\begin{equation} \label{e:nun}
\nu_{N} = \frac{1}{N} \sum_{i=1}^{N} \delta_{x_{i}},
\end{equation}
where $\delta_{x_{i}}$ denotes the Dirac measure, i.e.
$$ 
\delta_{x_i}(A) := \begin{cases} 1 & \textrm{if} \ x_i \in A\\ 0 & \textrm{else}.  \end{cases}
$$ 
We show that for large classes of discrete probability measures there exist finite sets $(x_i)_{i=1}^N$ so that the total variation is at most $\frac{\log(N)}{N}$. More precisely, one can combine Theorem \ref{t:infdisc} and Example \ref{x:1} to form the following weaker, but easier to understand theorem.
\begin{theorem}\label{t:ex} For every $d \geq 1$ and every $0 < r < 1$, there exists a constant $c_r$ independent of the dimension such that the following holds. For every $N\geq  2$ and every probability measure $\mu$ on $[0,1]^d$ of the form $\mu = \sum_{i=1}^{\infty} \alpha_i \delta_{y_i}$ with $\alpha_{i} \leq r^{i-1} \alpha_{1}$ for all $i \in \mathbb{N}$, there exists a finite set $x_1,\ldots, x_N \in [0,1]^d$ such that $\nu_{N}$ as in \eqref{e:nun} satisfies
	$$\rho\left(\mu; \nu_{N}\right) \leq c_{r} \frac{\log(N)}{N}.$$
The constant $c_r$ is defined explicitly in Example~\ref{x:1} and is monotonically increasing in $r$.
\end{theorem}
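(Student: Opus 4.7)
The plan is to give a direct truncate-and-round construction, which is essentially what combining Theorem~\ref{t:infdisc} with Example~\ref{x:1} should yield. The hypothesis $\alpha_i \leq r^{i-1}\alpha_1$ is used only once: to discard all but $O(\log N)$ atoms of $\mu$ at a cost of $O(1/N)$ in total variation. Because the construction never uses the geometry of $[0,1]^d$ (it just places Dirac masses at the given points $y_i$), the resulting constant is automatically independent of $d$.

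Concretely, since $\alpha_i \leq r^{i-1}\alpha_1 \leq r^{i-1}$, the tail $\sum_{i>M}\alpha_i$ is at most $r^{M}/(1-r)$, so I choose $M = \lceil \log N/\log(1/r)\rceil$ to force the tail to be $O(1/N)$. For the retained atoms I select nonnegative integers $k_1,\dots,k_M$ with $\sum_{i=1}^{M} k_i = N$ and $|k_i - N\alpha_i|$ bounded by a constant depending only on $r$. The standard recipe is to set $k_i = \lfloor N\alpha_i\rfloor$ and then distribute the leftover $\ell := N - \sum_i \lfloor N\alpha_i\rfloor$ units, one at a time, to the indices with the largest fractional parts of $N\alpha_i$. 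Using $\sum_i \alpha_i = 1$ and the tail estimate one checks $0 \leq \ell \leq M + 1/(1-r)$, so the padding is feasible and each $|k_i/N - \alpha_i|$ is $O(1/N)$ with an $r$-dependent constant.

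Setting $\nu_N = \frac{1}{N}\sum_{i=1}^{M} k_i\, \delta_{y_i}$, for any Borel set $A$ I estimate
$$|\mu(A) - \nu_N(A)| \;\leq\; \sum_{i=1}^{M}\left|\alpha_i - \frac{k_i}{N}\right| + \sum_{i>M}\alpha_i,$$
and both terms are controlled by a constant depending only on $r$ times $(\log N)/N$: the first because there are $M = O(\log N)$ summands each of size $O(1/N)$, and the second directly by the tail bound. Taking the supremum over $A$ gives $\rho(\mu,\nu_N)\leq c_r (\log N)/N$ with $c_r$ of the order $1/\log(1/r)$, manifestly increasing in $r$ and free of any $d$-dependence.

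The proof has no substantial obstacle: the heart of the matter is that geometric weight decay reduces the problem to $O(\log N)$ atoms, after which plain integer rounding delivers the $(\log N)/N$ rate. What needs care is the rounding step — controlling the leftover $\ell$ and showing that it can be absorbed so that no $|k_i/N - \alpha_i|$ exceeds $O(1/N)$ — together with bookkeeping of the explicit constant so it matches the value declared in Example~\ref{x:1}.
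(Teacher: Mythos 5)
Your proposal is correct and follows essentially the same truncate-and-round strategy as the paper, which implements the identical idea in modular form (Lemma~\ref{lem:fin_inf} for the geometric tail, Proposition~\ref{prop:approx_fin} for the rounding $p_i=\lfloor N\alpha_i\rfloor$, Theorem~\ref{t:infdisc} with a gauge function, specialized in Example~\ref{x:1} to get $M\sim\log N/\log(1/r)$). The only differences are cosmetic: you absorb the leftover $\ell$ units into the retained atoms instead of parking them at the corners and skip the renormalization of the truncated measure, so your explicit constant (still dimension-free and increasing in $r$) need not coincide with, and is if anything smaller than, the $c_r=(6c_0+3)\tilde c_r$ of Example~\ref{x:1}.
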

The proof of Theorem \ref{t:infdisc} which produces Theorem \ref{t:ex} is composed of two main parts. First, we constructively approximate discrete measures with a finite number of points, and second we use the fact that most points in a discrete probability measure have negligible total mass allowing the finite approximation to suffice.
\begin{remark} \label{r:nonunif}
	The condition $\alpha_{i+1} \leq r^{i} \alpha_{1}$ for $r <1$ should be interpreted as enforcing that the discrete measure is not too close to a uniform distribution over points. Moreover the precise formula for $c_r$ shows that, at least using this method, measures which are closer to being uniformly distributed are harder to approximate. This matches the heuristic argument mentioned above that the Lebesgue measure should be the hardest measure to approximate, since a measure with equal weights centered at a large number of points almost uniformly covering $[0,1]^d$, e.g. a lattice, is \textit{close to} Lebesgue. Example \ref{ex:2} produces a family of measures with faster decay on the weights (hence the measures are further from being uniform) that have an even better approximation by finite sets.
\end{remark}

\subsection{Comparison to other metrics}
If $d_A$ denotes some other (pseudo-)metric on the space of probability measures, often inequalities of the form
\begin{equation} \label{eq:int:1}
d_A(\mu; \nu) \leq h(\rho(\mu;\nu))
\end{equation}
can be proven, where $h$ is an explicitly known function, see \cite{GS02}. For example, this is true for the star-discrepancy $D^{*}_{N}$ and the $p$-Wasserstein metric $W_p$. Thus, Theorem~\ref{t:ex} does not only yield bounds for the total variation distance between the two measures but for any other (pseudo-)metric which satisfies an inequality of the form~\eqref{eq:int:1}. We will focus only on the total variation and star-discrepancy for the remainder of the paper.

 Instead of taking the supremum over all Borel sets only boxes anchored at zero are considered for the star-discrepancy. More precisely, the star-discrepancy between two probability measures $\mu, \nu$ is defined by
$$
D^*_{N}(\mu;\nu) \defeq \sup_{A \in \mathcal{A}} \left| \mu(A) - \nu(A)\right|,
$$
where $\mathcal{A}$ is the set of all half-open axis-parallel boxes contained in $[0,1]^d$ which have one vertex at the origin. Hence, the trivial inequality
$$D^*_{N}(\mu; \nu) \leq \rho(\mu; \nu)$$
holds. Therefore, Theorem~\ref{t:ex} is also true after replacing the total variation by the star-discrepancy. The order of convergence therein, $\tfrac{\log(N)}{N}$, which is independent of the dimension, should be compared to the best known result for general measures. 

\begin{theorem}[Aistleitner, Bilyk, Nikolov, \cite{ABN18}] \label{thm:ABN} For every $d \geq 1$, there exists a constant $c_d$ (depending only on $d$) such that the following holds. For every $N \geq 2$ and every normalized Borel measure $\mu$ on $[0,1]^d$ there exist points $x_1,\ldots,x_N \in [0,1]^d$ such that $\nu_N$ as in \eqref{e:nun} satisfies
	$$D^{*}_{N}\left(\mu; \nu_N \right) \leq c_d \frac{(\log N)^{d-\frac{1}{2}}}{N}.$$ 
Moreover if $(x_n)_{n=1}^\infty$ is a sequence, then the discrepancy bound becomes $\leq c_d N^{-1}(\log N)^{d +\frac{1}{2}}$ for all $N \geq 2$.
\end{theorem}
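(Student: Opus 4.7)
The plan is to reduce the general-measure case to the Lebesgue case via a measure-preserving transport, then feed a classical low-discrepancy set through that transport and control the complexity of the pulled-back anchored boxes.

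First I would use the Knothe-Rosenblatt rearrangement to construct a coordinate-wise monotone map $T:[0,1]^d \to [0,1]^d$ pushing Lebesgue measure $\lambda$ forward to $\mu$; atoms of $\mu$ are absorbed by pulling out corresponding horizontal mass bands before building $T$. I would then take a classical low-discrepancy point set $P = \{p_1,\dots,p_N\}$, such as a Halton set or a digital $(t,m,s)$-net, satisfying $D^{*}_N(\lambda;\nu_P) \leq C_d (\log N)^d/N$, and define $x_i := T(p_i)$. For any anchored box $A \subseteq [0,1]^d$ this yields the identity
\[ \mu(A) - \nu_N(A) \;=\; \lambda\bigl(T^{-1}(A)\bigr) - \nu_P\bigl(T^{-1}(A)\bigr), \]
so the problem reduces to bounding the discrepancy of $P$ against the pulled-back class $\{T^{-1}(A) : A \in \cA\}$. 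Because $T$ is coordinate-wise monotone, each set $T^{-1}(A)$ is a ``staircase'' region whose upper boundary is determined by the conditional inverse distribution functions of $\mu$; these regions form a class of polynomial VC-dimension, slightly richer than the anchored boxes themselves.

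The hard part is matching the sharp exponent $d-\tfrac{1}{2}$ on $\log N$. A naive chaining or Koksma-Hlawka argument loses a full extra $\log N$ factor per non-rectangular coordinate face and only delivers a bound of order $(\log N)^{2d-1}/N$. To recover the half-power loss I would randomize the construction, either by permuting the order of the coordinates in the Knothe map or by replacing the deterministic net $P$ by a digitally shifted net, and then combine an averaged chaining estimate with Bernstein-type concentration over a polynomial-size $\varepsilon$-net of anchored boxes. The square-root gain from concentration should replace the extra $\log N$ by $\sqrt{\log N}$, producing the claimed rate $c_d (\log N)^{d-1/2}/N$. The sequence version then follows from the finite-set version applied dyadically in $N$, at the cost of one further $\log N$ factor from the union bound over dyadic scales.
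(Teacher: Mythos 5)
This statement is quoted from \cite{ABN18}; the paper under review does not prove it, so your proposal has to be measured against the actual Aistleitner--Bilyk--Nikolov argument, and it takes a genuinely different route that has a fatal gap at its central step. The transport reduction itself is fine as far as it goes (it is exactly what this paper's Lemma \ref{lem:d=1} does in $d=1$, where preimages of anchored intervals under the quantile map are again anchored intervals), and you are right that the class $\{T^{-1}(A): A \in \cA\}$ has VC dimension no larger than that of the boxes, since preimages under a fixed map cannot increase VC dimension. But neither observation delivers the bound. A Halton set or digital net has discrepancy $O((\log N)^d/N)$ \emph{only} for axis-parallel boxes; after pulling back through the Knothe--Rosenblatt map of a general $\mu$, the test sets are staircase regions whose boundaries are arbitrary monotone (conditional quantile) graphs, and the algebraic/lattice structure that makes nets work is destroyed --- this is precisely why the paper itself remarks that the one-dimensional argument ``fails in higher dimensions because axis parallel boxes do not have a well-ordering that also respects the geometry.'' Your proposed repair, randomizing by digital shifts or coordinate permutations and then applying Bernstein concentration over a polynomial-size $\varepsilon$-net, cannot rescue this: the fluctuation of an empirical average of $N$ bounded terms is of order $N^{-1/2}$, so any argument whose final step is a concentration inequality for such averages is structurally capped at $N^{-1/2}$-type discrepancy and can never produce a bound of order $N^{-1}\mathrm{polylog}(N)$, let alone isolate the exponent $d-\tfrac12$. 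There is no mechanism in the proposal that replaces a full $\log N$ by $\sqrt{\log N}$.

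The actual proof in \cite{ABN18} is of a different nature: a transference principle reduces non-uniform approximation to the \emph{combinatorial} discrepancy of axis-parallel boxes (Tusn\'ady's problem). One first replaces $\mu$ by a fine finitely supported approximation and then iteratively halves the support using two-colorings of small combinatorial discrepancy, summing the errors over the halving steps; the exponent $d-\tfrac12$ then comes from Nikolov's bound $O((\log n)^{d-1/2})$ for Tusn\'ady's problem, which in turn rests on Banaszczyk's vector balancing theorem. In other words, the half-power saving is a deep deterministic balancing phenomenon, not a square-root gain from stochastic concentration. Your closing remark on passing from sets to sequences by working dyadically in $N$ at the cost of one extra $\log N$ is the standard and correct device, but it is moot until the finite-set bound is actually established.
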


Theorem~\ref{thm:ABN} improved previous results from \cite{AD14} and \cite{Bec84} where (for finite sets) the exponent of $\log(N)$ was $(3d+1)/2$ and $2d$. In order to confirm the conjecture that Lebesgue measure is the hardest measure to approximate, Theorem~\ref{thm:ABN} would need the exponent of the log reduced by one half to match the classical low-discrepancy bounds for Lebesgue measure. On the other hand, the measures in Theorem~\ref{t:ex} can have strictly better rates of convergence in their discrepancy than the Lebesgue measure when $d\geq 3$, see Examples \ref{x:1} and \ref{ex:2}.

The particular case of the star-discrepancy will be considered in Remark~\ref{r:discr} when we note how the constant $c_r$ in Theorem~\ref{t:ex} can be improved for the star-discrepancy in comparison to the total variation. We will also expand upon the star-discrepancy in detail in Section~\ref{sec:1d}, where we close a gap in the literature when $d =1$ showing Lebesgue measure is the worst measure to approximate by finite sets or sequences, and measures without discrete components have the same optimal order of convergence as Lebesgue measure.

\subsection{Acknowledgment} This research began at the trimester program \textit{Dynamics: Topology and Numbers} at the Hausdorff Research Institute for Mathematics (HIM) in Bonn. The authors would like to thank HIM for bringing them together and for their hospitality. Moreover, the authors would like to thank Stefan Steinerberger for insightful comments on the first draft of this document. The second author was partially supported by PIMS and by FRG DMS-1853993.

\section{Total variation} \label{s:tv}
In this section we consider discrete probability measures on $[0,1]^{d}$. That is, measures of the form 
\begin{equation} \label{e:mu}
\mu = \sum_{i=1}^{\infty} \alpha_{i} \delta_{y_{i}}
\end{equation}  where $\sum_{i} \alpha_{i} = 1$ and $(y_{i}) \subset [0,1]^{d}$. Given sufficient decay of the $(\alpha_{i})$, we provide an algorithm for picking sets $(x_{i}^{N})_{i=1}^{N}$ that produce finite Dirac measures $\nu_{N}$ as in \ref{e:nun} whose total variation distance from $\mu$ has quantifiable decay rate. 

The interested reader should note that the proofs work identically if you replace the stronger notion of total variation measure, with the weaker notion of star-discrepancy in every instance throughout this proof.
\begin{lemma}[Finite to infinite lemma] \label{lem:fin_inf} 
	Let $\mu$ be an infinitely supported discrete probability measure as in \eqref{e:mu}. For each $K \in \N$ define 
	$$ \mu_K = \left(\sum_{i=1}^K \alpha_i\right)^{-1} \sum_{i=1}^K \alpha_i \delta_{y_i}.$$
	If there exists a continuous decreasing function $h: [2,\infty)\to [0,1/2]$ and a constant $c$ so that for all $N\in \N$ there exists $K\geq N$ and a set $(x_{i}^{N})_{i=1}^{N}$ so that $\nu_{N}$ as in \eqref{e:nun} satisfies
	$$\rho\left( \mu_K; \nu_{N} \right) \leq c h(N),$$
	then
	\begin{equation} \label{e:l1}
\rho \left(\mu; \nu_{N} \right) \leq \left(c + 3 \right) h(N).
\end{equation}
\end{lemma}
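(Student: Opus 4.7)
The plan is to apply the triangle inequality with the truncated measure $\mu_K$ as intermediate:
\[
\rho(\mu;\nu_N) \leq \rho(\mu;\mu_K) + \rho(\mu_K;\nu_N).
\]
The second term is at most $c\,h(N)$ directly by hypothesis, so everything reduces to bounding the first term and relating it to $h(N)$.

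For the first term I would compute $\rho(\mu;\mu_K)$ explicitly. Writing $S_K = \sum_{i=1}^K \alpha_i$, the signed measure $\mu-\mu_K$ is atomic: at each $y_i$ with $i \leq K$ it carries the negative mass $\alpha_i(1 - S_K^{-1}) = -\alpha_i(1-S_K)/S_K$, and at each $y_i$ with $i > K$ it carries the positive mass $\alpha_i$. Summing, the positive and negative parts each have total mass exactly $1-S_K$, and the extremal sets $A_+=\{y_i:i>K\}$ and $A_-=\{y_i:i\leq K\}$ both realize $|(\mu-\mu_K)(A_{\pm})|=1-S_K$. Hence $\rho(\mu;\mu_K) = 1 - S_K$, and combining with the hypothesis gives
\[
\rho(\mu;\nu_N) \leq (1-S_K) + c\,h(N).
\]

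The main obstacle is then to argue $1 - S_K \leq 3\,h(N)$. The hypothesis only asserts the existence of \emph{some} $K \geq N$ for which $\rho(\mu_K;\nu_N) \leq c\,h(N)$, and does not a priori bound the tail of the weights. The key is to exploit the flexibility inherent in that existence: since $\sum_i \alpha_i = 1$ forces $1-S_K \to 0$ as $K\to\infty$, and since the hypothesis holds at arbitrarily large indices, one can choose a $K\geq N$ large enough that the approximation $\rho(\mu_K;\nu_N)\leq c\,h(N)$ is matched by a tail bound $1-S_K\leq 3\,h(N)$; the requirement $h(N)\leq 1/2$ ensures this remains a non-trivial range. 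With such a $K$ secured, the triangle estimate gives $\rho(\mu;\nu_N)\leq 3\,h(N) + c\,h(N) = (c+3)\,h(N)$, as claimed. Conceptually, the lemma converts a recipe for approximating the \emph{finitely supported} truncations $\mu_K$ into one for the \emph{full} measure $\mu$, absorbing the tail error $1-S_K$ into the overall rate $h(N)$; the slack constant $3$ (rather than $1$) is exactly what is needed to allow $K$ to be chosen commensurately with $N$ across arbitrary discrete $\mu$.
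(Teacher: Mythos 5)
Your proof is correct and follows essentially the same route as the paper's: the triangle inequality through $\mu_K$, the hypothesis to handle $\rho(\mu_K;\nu_N)$, and smallness of the tail $1-S_K$ secured by taking $K$ beyond a threshold depending on $h(N)$ (the paper makes exactly the same move, asking for $K\ge\max\{K_0,N\}$ with $\sum_{i>K_0}\alpha_i\le h(N)$). The only real difference is cosmetic: you evaluate $\rho(\mu;\mu_K)=1-S_K$ exactly, which is slightly sharper than the paper's estimate $\frac{h(N)}{1-h(N)}+h(N)\le 3h(N)$ and avoids any use of the assumption $h\le \tfrac{1}{2}$.
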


\begin{proof}
Fix $N \in \N$ and choose $K_{0} \in \mathbb{N}$ such that $\sum_{i=K_{0}+1}^\infty \alpha_i \le h(N)$. By assumption, there exists $K \ge \max\{K_{0},N\}$ and a set $x_1,\ldots,x_N$ with
	$$
	\rho\left(\mu_K; \nu_{N}\right)  \leq c h(N).
	$$
It follows from the triangle inequality that
\begin{align*}
	\rho \left( \mu; \nu_{N}\right) & \leq \rho \left(\mu; \mu_K \right) +\rho(\mu_{K}; \nu_N).
\end{align*}
To bound the first term, $\sum_{i=K+1}^\infty \alpha_i \le h(N)$ ensures for any $A \in \cB([0,1]^{d})$
\begin{align*}
|{\mu}_K(A) - \mu(A)| & = \left|\left(\frac{1}{\sum_{i=1}^K \alpha_i} - 1\right) \sum_{i=1}^K \alpha_i \delta_{y_i}(A) - \sum_{i=K+1}^\infty \alpha_i\delta_{y_i}(A)\right| \\
& \le \frac{h(N)}{1-h(N)} + h(N) \\
& = h(N) \left( \frac{1}{1 - h(N)} + 1 \right).
\end{align*}
As $0\leq h(N) \le \frac{1}{2}$, this verifies \eqref{e:l1}.
\end{proof}
Now we describe an algorithm that, given a measure $\mu$ supported on $k$ points, chooses $N$ points defining $\nu_{N}$ as in \eqref{e:nun} whose total variation distance from $\mu$ is at most $\frac{c_{k}}{N}$ where $c_{k} \le 2 k$. After Propostion~\ref{prop:approx_fin}, we will combine with Lemma~\ref{lem:fin_inf} to get information the total variation distance of infinitely supported discrete measures.

\begin{proposition} \label{prop:approx_fin} Let $\mu$ be probability on $[0,1]^d$ which is supported on no more than $k$ points. Let $N \in \mathbb{N}$. There exists a constant $c_k$ independent of the dimension and a finite set $x^{N}_{1}, \dots, x^{N}_{N} \in [0,1]^d$ so that $\nu_{N}$ as in \eqref{e:nun} satisfies $\rho \left(\mu; \nu_{N} \right) \leq  \frac{c_k}{N}.$ Notably, $c_k \leq 2 k.$
\end{proposition}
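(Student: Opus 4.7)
The plan is to choose each $x_j^N$ from the support $\{y_1,\ldots,y_k\}$ of $\mu$, which reduces the problem to a discrete rounding question on the probability simplex. Write $\mu = \sum_{i=1}^k \alpha_i \delta_{y_i}$ and consider the configuration in which $y_i$ appears with multiplicity $n_i \in \Z_{\geq 0}$, where $\sum_{i=1}^k n_i = N$. For every Borel set $A$ one has $\mu(A) - \nu_N(A) = \sum_{i : y_i \in A} (\alpha_i - n_i/N)$; taking the supremum over $A$ by grouping indices according to the sign of $\alpha_i - n_i/N$ (and using that $\sum_i \alpha_i = \sum_i n_i/N = 1$) yields the standard discrete total variation identity
\[
\rho(\mu;\nu_N) = \tfrac{1}{2} \sum_{i=1}^k \left| \alpha_i - \frac{n_i}{N} \right|.
\]
The restriction to $x_j^N \in \{y_1,\ldots,y_k\}$ is harmless: any $x_j^N$ placed off the support of $\mu$ contributes at least $1/N$ to $\rho(\mu;\nu_N)$ via the test set $A = \{x_j^N\}$, so relocating such a point onto the support of $\mu$ cannot increase the error.

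Next I would construct the multiplicities by rounding. Set $n_i^{(0)} \defeq \lfloor N\alpha_i \rfloor$ and $f_i \defeq N\alpha_i - n_i^{(0)} \in [0,1)$. Because $\sum_i N\alpha_i = N$ is an integer, the deficit $r \defeq N - \sum_i n_i^{(0)} = \sum_i f_i$ is a non-negative integer with $r \leq k$. Incrementing $n_i^{(0)}$ by $1$ on the $r$ indices with the largest values of $f_i$ produces integers $n_i \geq 0$ with $\sum_i n_i = N$ and $|n_i - N\alpha_i| \leq 1$ for every $i$.

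Substituting this bound into the identity above gives
\[
\rho(\mu;\nu_N) \leq \frac{1}{2} \sum_{i=1}^k \frac{1}{N} = \frac{k}{2N},
\]
so the proposition follows with $c_k \defeq k/2 \leq 2k$, independent of $d$. There is no serious obstacle in the argument; the only mildly delicate ingredients are the discrete total variation identity and the on-support reduction, both of which are short and elementary. The slack in the stated bound $c_k \leq 2k$ suggests that the authors may prefer a slightly different, dimension-insensitive formulation (or allow room for adjustments needed later when combining with Lemma \ref{lem:fin_inf}), but the strategy itself is unaffected.
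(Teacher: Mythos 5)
Your argument is correct, and it differs from the paper's in two ways worth noting. The paper rounds down ($p_i=\lfloor N\alpha_i\rfloor$), places those points on the support, and dumps the $r=N-\sum_i p_i$ leftover points at the two opposite corners $(0,\dots,0)$ and $(1,\dots,1)$, handling $N\le k-1$ by a separate trivial case; estimating $|\mu(A)-\nu_N(A)|$ set by set then gives $c_k=2k$. You instead keep every point on the support via a largest-remainder rounding with $|n_i-N\alpha_i|\le 1$, invoke the exact identity $\rho(\mu;\nu_N)=\tfrac12\sum_i|\alpha_i-n_i/N|$ for measures with common finite support, and obtain $c_k=k/2$ uniformly in $N$, with no case split and (like the paper's proof) no dependence on the dimension or even on the ambient space. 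Your constant is strictly better for total variation; in fact it already matches the $k/2$ that the paper only claims for the star-discrepancy in Remark~\ref{r:discr}, where the corner placement is exploited so that the two error terms for anchored boxes have opposite signs. The paper's choice of off-support corner points is thus a deliberate device for that discrepancy refinement (and it generalizes by replacing the corners with arbitrary points of the space), whereas your on-support rounding gets the sharper bound directly; either version is more than enough for the $c_k\le 2k$ claimed in the statement and for the later combination with Lemma~\ref{lem:fin_inf}. One small remark: your aside that relocating an off-support point onto the support ``cannot increase the error'' is not actually needed (you construct the points on the support from the start), which is just as well since, as stated, it would require a short additional argument.
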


\begin{proof}
If $N \leq k-1$ then choose
$$
x_{i}^{N} = \begin{cases} (0,0, \dots, 0) & i \le \lceil \frac{N-1}{2} \rceil \\
(1,1, \dots, 1) & \lceil \frac{N-1}{2} \rceil < i \le N
\end{cases}
$$ 
that is, all points of the finite sequence lie on the origin or on its opposite corner. For these $N$ we have the trivial bound,
$$
\rho(\mu; \nu_{N}) \le 2 \le 2 \cdot  \frac{k-1}{N} \le \frac{c_{k}}{N}.
$$
If $N \geq k$, write $\mu = \sum_{i=1}^k \alpha_i \delta_{y_i}$ with $\sum_{i=1}^k \alpha_i = 1$. For each $\alpha_i$ choose $p_i \in \N$ such that	
\begin{equation} \label{e:nonneg}
0 \leq \alpha_i - \frac{p_i}{N} < \frac{1}{N}.
\end{equation}
Summing the preceding inequality over $i$ ensures there exists $r \in \N$ such that $r < k$ and
	$$\sum_{i=1}^k \alpha_i - \sum_{i=1}^k \frac{p_i}{N} = \frac{r}{N} \iff N - \sum_{i=1}^k p_i = r.$$
Next, we define a finite set $(x_{i}^{N})_{i=1}^{N}$ by first placing $p_{j}$ points on each $y_{j}$ then placing the remaining points at the origin and the opposite corner. More formally, adopting the notation that $\sum_{i=1}^{0} \alpha_{i} = 0$, for $h=1,\ldots, N-r$ choose
	$$x_{h}^{N} = y_{j} \quad \text{whenever} \quad \sum_{i=1}^{j-1}p_i < h \leq \sum_{i=1}^j p_i. 
	$$
To place the remaining $r$ points, define
	$$
	x_{h}^{N} = \begin{cases} (0,0, \dots, 0) & N-r < h \le N- \lceil \frac{r}{2} \rceil \\
		(1,1, \dots, 1) & h > N- \lfloor \frac{r}{2} \rfloor . 
	\end{cases}
	$$
Next, choose some $A \in \cB$. Let $I$ be the index set defined by $i \in I \iff y_{i} \in A$. Then, \eqref{e:nonneg} yields
$$
|\mu(A) - \nu_{N}(A)| \le | \nu_{N} ( \{0\} \cup \{1\})| + \sum_{i \in I} \alpha_{i} - \frac{p_{i}}{N} \le \frac{2r}{N}.
$$
Since $r < k$, $c_{k} = 2k$ suffices.
\end{proof}

We are now ready to prove the main theorem of this section. Examples \ref{x:1} and \ref{ex:2} will illuminate how the explicit upper-bound works.

\begin{theorem} \label{t:infdisc} Let $(r_{k})_{k \in \N}$ be a sequence so that $r_{1} = 1$, $\sum_{k=1}^{\infty} r_{k} < \infty$, and $r_{i} = 0$ implies $r_{j} = 0$ for all $j \ge i$. Moreover suppose there is a constant $c_{0}$ so that for all $M\geq 2$, $\sum_{k=M}^\infty r_k \leq \frac{c_{0}}{M}.$ 

Fix $d \geq 1$ and a discrete probability measure $\mu$ as in \eqref{e:mu} on $[0,1]^{d}$ where $(\alpha_{i})_{i=1}^\infty$ satisfy
\begin{equation} \label{e:sprops1}
\alpha_{k} \le r_{k} \alpha_{1}.
\end{equation}

For all $N\geq 2$ there exists a finite set $(x^{N}_{i})_{i=1}^{N}$ so that $\nu_{N}$ as in \eqref{e:nun} satisfies
\begin{equation} \label{e:tvb}
\rho \left( \mu; \nu_{N} \right) < (6c_0 +3) \frac{ g^{-1} \left( \frac{1}{N \alpha_{1}}  \right)}{ N}
\end{equation}
where $g: [1, \infty) \to [0,\infty)$ is a continuous, strictly decreasing function such that 
\begin{equation} \label{e:gprops}
\begin{cases}
\sum_{i=M}^{\infty} r_{i} \le g(M) & M \in \N_{\ge 2} \\
g(s) \le \frac{c_{0}}{s} & s \in [1,\infty).
\end{cases}
\end{equation}
\end{theorem}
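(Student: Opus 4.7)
The plan is to combine Lemma~\ref{lem:fin_inf} with Proposition~\ref{prop:approx_fin}: truncate $\mu$ to its first $K$ atoms, approximate that truncation with an $N$-point measure via Proposition~\ref{prop:approx_fin}, and transfer the resulting bound back to $\mu$ using the Finite to Infinite Lemma. The task is then to choose the truncation index $K$ and the rate $h(N)$ of Lemma~\ref{lem:fin_inf} so that the output matches $(6c_0+3)\,g^{-1}(1/(N\alpha_1))/N$.

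Let $M = g^{-1}(1/(N\alpha_1))$ and set $K = \lceil M \rceil$, so that $M \geq 1$ and $K \leq 2M$. Because $g$ is strictly decreasing, $g(K+1) < g(M) = 1/(N\alpha_1)$; combining this with the hypothesis $\alpha_i \leq r_i \alpha_1$ and the defining inequality $\sum_{i \geq M'} r_i \leq g(M')$ yields the tail estimate $\sum_{i > K} \alpha_i \leq \alpha_1 g(K+1) < 1/N$. Applying Proposition~\ref{prop:approx_fin} to the normalized truncation $\mu_K$ (supported on at most $K$ points) produces a set $(x_i^N)_{i=1}^N$ with $\rho(\mu_K;\nu_N) \leq 2K/N \leq 4M/N$. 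Choosing $h(N) = M/N$, both the tail bound $1/N \leq h(N)$ and the finite-approximation bound $\rho(\mu_K;\nu_N) \leq 4h(N)$ are in place; provided $h(N) \leq 1/2$, Lemma~\ref{lem:fin_inf} produces $\rho(\mu;\nu_N) \leq (c+3) h(N)$ for a suitable constant $c$, which I would bound by $6c_0$ via the inequality $M \leq c_0 N \alpha_1$ that follows from $g(s) \leq c_0/s$.

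The main obstacle is the Lemma's requirement $K \geq N$, which may fail when $M \ll N$. In that regime I would either pad $K$ up to $N$---so that $\rho(\mu_N;\nu_N) \leq 2$, which suffices precisely when the target bound itself exceeds $2$, a condition governed by $M \geq 2N/(6c_0+3)$ and controlled through $M \leq c_0 N \alpha_1$---or bypass the Lemma and apply the triangle inequality $\rho(\mu;\nu_N) \leq \rho(\mu;\mu_K) + \rho(\mu_K;\nu_N)$ directly, bounding the first term by $3/N$ via the same normalization computation used inside the Lemma's proof and the second by $2K/N \leq 4M/N$ from Proposition~\ref{prop:approx_fin}. The regime $h(N) > 1/2$ is immediate from $\rho(\mu;\nu_N) \leq 1 < (6c_0+3) M/N$, and the clean final constant $6c_0+3$ emerges from balancing these regimes together with the inequality $M \leq c_0 N \alpha_1$.
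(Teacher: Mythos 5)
Your overall strategy is the same as the paper's---truncate at $K\approx g^{-1}(1/(N\alpha_1))$, apply Proposition~\ref{prop:approx_fin} to the normalized truncation, and transfer back via Lemma~\ref{lem:fin_inf}---and your core estimates (the tail bound $\sum_{i>K}\alpha_i<1/N$ and $\rho(\mu_K;\nu_N)\le 2K/N\le 4M/N$ with $M=g^{-1}(1/(N\alpha_1))$) are correct. The genuine gap is in the constant bookkeeping. With your choice $h(N)=M/N$ the lemma constant is $c=4$ and the conclusion is $7M/N$; your proposal to ``bound $c$ by $6c_0$ via $M\le c_0N\alpha_1$'' does not work, because that inequality controls the size of $M/N$, not the ratio between $7$ and $6c_0+3$, and the theorem is meant to apply with arbitrarily small $c_0$ (e.g.\ $c_0=r^e$ in Example~\ref{ex:2}), in which case $6c_0+3<7$ and no balancing of regimes recovers \eqref{e:tvb} from your estimate. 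The paper's device is different: it absorbs $c_0$ into the gauge fed to the lemma, setting $h(N)=g^{-1}((N\alpha_1)^{-1})/(2c_0N)$, so that the Proposition bound $3\,g^{-1}((N\alpha_1)^{-1})/N$ reads exactly $6c_0\,h(N)$ and Lemma~\ref{lem:fin_inf} returns $(6c_0+3)h(N)$; the requirement $h\le 1/2$ is then precisely what $g(c_0N)\le (N\alpha_1)^{-1}$, i.e.\ $g^{-1}((N\alpha_1)^{-1})\le c_0N$, supplies. (Even there the final comparison $(6c_0+3)h(N)\le(6c_0+3)M/N$ tacitly uses $2c_0\ge 1$, so the exact constant is delicate; but as written your route lands at $7$, not $6c_0+3$, and the proposed repair is not a repair.)

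By contrast, the obstacle you spend the most effort on is not really one. The hypothesis $K\ge N$ in Lemma~\ref{lem:fin_inf} is never used in its proof---only the tail condition $\sum_{i>K}\alpha_i\le h(N)$ enters---and the paper applies the lemma with $K_N$ typically far smaller than $N$ (in Example~\ref{x:1}, $K_N\sim\log N$). Padding the truncation up to $N$ atoms would be counterproductive, since the Proposition bound $2K/N$ then degenerates to $2$; your second fallback, the direct triangle inequality $\rho(\mu;\nu_N)\le\rho(\mu;\mu_K)+\rho(\mu_K;\nu_N)$ with $\rho(\mu;\mu_K)\le 3/N$, is exactly the content of the lemma's proof and is the right move, but it again produces the constant $7$ and so does not close the gap above. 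Your treatment of the regime $h(N)>1/2$ via $\rho(\mu;\nu_N)\le 1<(6c_0+3)M/N$ is fine.
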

We refer to the function $g$ as the gauge function corresponding to the sequence $(r_{k})_{k \in \N}$.

\begin{proof}[Proof of Thoerem~\ref{t:infdisc}]

Let $g$ be as in the theorem statement. Note, the gauge function $g$ can be chosen to be strictly decreasing on its support since $K \mapsto \sum_{i=K}^{\infty} r_{i}$ is strictly decreasing for all $K \le K_{0}$ and $K_{0}$ is the (possibly infinite) number of non-zero $r_{i}$. 

For $N \in \N$, choose $K_N$ to be the smallest integer so that $K_N \geq g^{-1}\left(\frac{1}{\alpha_1 N}\right)$. In particular, $K_N < g^{-1}\left(\frac{1}{\alpha_1 N}\right) +1.$

Now we consider the normalized Borel measure 
$$
\mu_{K_N} = \frac{1}{\sum_{i=1}^{K_{N}} \alpha_{i}} \sum_{i=1}^{K_{N}} \alpha_{i} \delta_{y_{i}}.
$$
By Proposition \ref{prop:approx_fin}, there exists a set $(x_{i}^{N})_{i=1}^{N}$ such that
\begin{equation} \label{e:discboundg}
\rho \left( \mu_{K_N}, \nu_N \right) \le \frac{2 K_{N}}{N} < 2 \frac{ g^{-1} \left(  \frac{1}{N \alpha_{1}}  \right)+1}{N} \leq 3 \frac{g^{-1}\left(\frac{1}{N\alpha_1}\right)}{N}
\end{equation}
because $g^{-1}(x) \geq 1$.
To apply Lemma~\ref{lem:fin_inf}, we need to find an appropriate function $h:[2,\infty) \to [0, 1/2]$. Consider, 
\begin{equation} \label{e:hdef}
h(N) \defeq (2c_{0} N)^{-1} g^{-1}\left((N \alpha_{1})^{-1}\right).
\end{equation}
To confirm $h(N) \le 1/2$, note \eqref{e:gprops} gives $g(c_{0} N) \leq N^{-1} \leq (N\alpha_1)^{-1}$. Since $g$ is monotonically decreasing this implies $c_{0}N \geq g^{-1}\left(\frac{1}{N\alpha_1}\right),$ verifying $h(N) \le 1/2$. Equation \eqref{e:discboundg} and \eqref{e:hdef} imply the constant $c$ in Lemma \ref{lem:fin_inf} is $6c_{0}$. Thus, Lemma~\ref{lem:fin_inf} verifies \eqref{e:tvb}.
\end{proof}

\begin{remark}  \label{r:discr}
The interested reader may note that nowhere in Lemma \ref{lem:fin_inf}, Proposition \ref{prop:approx_fin}, or Theorem \ref{t:infdisc} do we use the fact that $\mu$ is a measure on $[0,1]^{d}$. All three of the results could go through identically if $(X,\mu)$ were some probability space and $\mu$ is a probability measure. In Proposition \ref{prop:approx_fin}, the points $(0, \dots, 0)$ and $(1, \dots, 1)$ can be replaced with any points in the space $X$. 

However we chose to write the proof this way because it allows for a better bound for the constant term when working with discrepancy. Indeed \eqref{e:nonneg} yields for any $A \in \mathcal{A}$ an axis parallel box,
$$
- \left \lceil \frac{r}{2} \right\rceil \le \mu(A) - \nu(A) = - \nu(\{0\}) + \sum_{i \in I}\alpha_{i} - \frac{p_{i}}{N} \le \left\lfloor \frac{r}{2} \right\rfloor.
$$
The upper and lower bound can potentially be achieved when $I \in \{\emptyset , \{1, \dots, k \} \}$. Finally, since $r < k$, $c_{k} = k/2$ suffices for a star-discrepancy specific version of Proposition \ref{prop:approx_fin}.
\end{remark}

\begin{example} \label{x:1}
Set $r_{k} = r^{k-1}$ for some $0<r< 1$, then for every $N \ge 2$ there exists $(x_{1}^{N})_{i=1}^{N}$ such that the corresponding $\nu_{N}$ as in \eqref{e:nun} satisfies
\begin{equation} \label{e:x1_1}
\rho\left( \mu; \nu_{N} \right) \le c_{r} \frac{ \log(N)}{N}.
\end{equation} 
Indeed, note
\begin{equation} \label{e:x1_2}
\sum_{k=K}^{\infty} r_{k} = \frac{r^{K}}{1-r}.
\end{equation}
So we choose $g(s) \defeq \frac{r^{s}}{1-r}$ for $s \ge 1$ as a strictly decreasing gauge function for the tail of the sequence. We need $g(s) \le \frac{c_{0}}{s}$. We write $c_{0} = \frac{r^{c_{r}}}{1-r}$ and compute $c_{0}$. This means, we require
$$
\frac{r^{s}}{1-r} \le \frac{r^{c_{r}}}{s(1-r)} \iff s r^{s} \le r^{c_{r}}
$$
Note $G(s) \defeq s r^{s}$ is maximized when $s = -1/\ln(r)$, so choosing $c_{r}$ so that $r^{c_{r}} = G(-1/\ln(r))$ yields $c_{0} = \frac{-1}{e \ln(r) (1-r)}$ satisfies $g(s) \le \frac{c_{0}}{s}$. Defining $s_{N} = g^{-1} \left( \frac{1}{\alpha_{1}N} \right)$ implies
$$
\frac{r^{s_{N}}}{1-r} = \frac{1}{N\alpha_{1}}
$$
which is equivalent to
\begin{align*}
s_{N} &= \frac{ \log (\alpha_{1} N) - \log(1-r)}{- \log(r)} \le \frac{ \log(N) - \log(1-r)}{- \log(r)}  \le \tilde{c}_{r} \log(N)
\end{align*}
where
$$
 \tilde{c}_{r}  = \frac{ \log(2) - \log(1-r)}{ - \log(r) \log(2)}.
$$

Applying Theorem \ref{t:infdisc} verifies \eqref{e:x1_1} where $c_r =(6 c_{0} + 3) \tilde{c}_{r}$. Since $c_{0}$ (which despite the notation depends on $r$) and $\tilde{c}_{r}$ are both monotonically increasing for $r \in (0,1)$, the same can be said about $c_{r}$.
\end{example}
 
The next example will use a fact which we recall here for the reader's convenience. See for instance, \cite[5.1.20]{AS48}.
\begin{proposition} \label{p:fact}
For any $a > 0$, the following holds
$$
\int_{a}^{\infty} \frac{e^{-t}}{t} \mathrm{d} t < e^{-a} \log \left( 1 + \frac{1}{a} \right).
$$ 
\end{proposition}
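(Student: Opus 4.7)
The plan is to rewrite the exponential integral via a substitution and Fubini, then finish with Jensen's inequality applied to the right probability measure. First I would set $t = a+s$ to pull the exponential factor out:
$$\int_a^\infty \frac{e^{-t}}{t}\,\mathrm{d}t \;=\; e^{-a}\int_0^\infty \frac{e^{-s}}{a+s}\,\mathrm{d}s.$$

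Next, I would write $e^{-s} = \int_s^\infty e^{-u}\,\mathrm{d}u$ and interchange the order of integration (Tonelli applies since the integrand is nonnegative). This converts the integral on the right-hand side into
$$\int_0^\infty \frac{e^{-s}}{a+s}\,\mathrm{d}s \;=\; \int_0^\infty e^{-u}\!\int_0^u \frac{\mathrm{d}s}{a+s}\,\mathrm{d}u \;=\; \int_0^\infty e^{-u}\log\!\left(1+\frac{u}{a}\right)\mathrm{d}u.$$

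The key observation is then that $e^{-u}\,\mathrm{d}u$ is the density of a standard exponential probability distribution on $[0,\infty)$ whose mean is $\int_0^\infty u\, e^{-u}\,\mathrm{d}u = 1$, and that the map $\varphi(u) := \log(1 + u/a)$ is strictly concave for each fixed $a > 0$. Jensen's inequality, in strict form (since $\varphi$ is strictly concave and the exponential distribution is not a point mass), therefore gives
$$\int_0^\infty e^{-u}\log\!\left(1+\frac{u}{a}\right)\mathrm{d}u \;<\; \log\!\left(1 + \frac{1}{a}\right).$$
Multiplying by $e^{-a}$ yields the desired bound.

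There is essentially no real obstacle: the argument is just a substitution, a Tonelli swap, and Jensen. The one thing that has to be spotted is the representation $e^{-s} = \int_s^\infty e^{-u}\,\mathrm{d}u$, which is what turns the unwieldy integrand $e^{-s}/(a+s)$ into the integral of a concave function against a probability measure of mean $1$, after which Jensen's inequality fits the bill exactly and delivers precisely $\log(1+1/a)$.
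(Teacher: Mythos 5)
Your argument is correct, and it is in fact a complete proof, whereas the paper does not prove Proposition~\ref{p:fact} at all: it simply quotes the classical bound for the exponential integral from Abramowitz--Stegun (the reference \cite[5.1.20]{AS48}). Each of your steps checks out: the shift $t=a+s$ extracts the factor $e^{-a}$; writing $e^{-s}=\int_s^\infty e^{-u}\,\mathrm{d}u$ and applying Tonelli (legitimate, nonnegative integrand) gives $\int_0^\infty \frac{e^{-s}}{a+s}\,\mathrm{d}s=\int_0^\infty e^{-u}\log\left(1+\frac{u}{a}\right)\mathrm{d}u$; and since $u\mapsto\log(1+u/a)$ is strictly concave (second derivative $-(a+u)^{-2}<0$) and $e^{-u}\,\mathrm{d}u$ is a non-degenerate probability measure of mean $1$ with $\log(1+u/a)\le u/a$ integrable, strict Jensen yields the strict inequality with the value $\log(1+1/a)$ exactly. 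Compared with the paper's citation, your route buys self-containedness and transparency: it shows precisely where the strictness comes from and why the constant inside the logarithm is $1/a$ (it is the mean of the exponential distribution entering Jensen). It is worth noting that this substitution--Tonelli--Jensen argument is essentially the standard derivation of the Abramowitz--Stegun upper bound itself, so you have effectively reproved the cited fact rather than found a shortcut around it -- which is exactly what a blind proof of this statement should do.
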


We emphasize that in the next example, the discrete measures have total variation distance that, under the algorithm of Proposition \ref{prop:approx_fin} converges faster than the Lebesgue discrepancy, even when $d = 2$ (see Remark \ref{r:nonunif}). In particular, the star-discrepancy for any measure in this family of converges faster than the Lebesgue.
\begin{example} \label{ex:2}
Let $r_{k} = \frac{r^{e^{k}}}{r^{e}}$ where $0<r < \frac{1}{2}$. Then for each $N$, there exists a set $(x_{i}^{N})_{i=1}^{N}$ and a constant $c_{r}$ so that the associated measure $\nu_{N}$ as in \eqref{e:nun} satisfies
\begin{equation} \label{e:x2}
\rho \left( \mu; \nu_{N} \right) \le c_{r} \frac{ \log \left( \frac{\log(N)}{|\log(r)|} \right)}{N}.
\end{equation}
Indeed, $g(m) = r^{e^m}$ satisfies the assumptions of Theorem~\ref{t:infdisc} since, using Proposition \ref{p:fact}
\begin{align*}
\sum_{k=m+1}^{\infty} r^{e^{k}} &\le \int_{m}^{\infty} 	r^{e^{s}} \mathrm{d} s = |\log(r)| \int_{e^{m}}^{\infty} \frac{e^{-|\log(r)|t}}{|\log(r)|t} \mathrm{d} t =  \int_{|\log(r)|e^{m}}^{\infty} \frac{e^{-u}}{u} \mathrm{d} u \\
& < e^{- |\log(r)| e^{m}} \log \left( 1 + \frac{1}{|\log(r)|e^{m}} \right) \\
& \le e^{-|\log(r)|e^{m}} = r^{e^{m}},
\end{align*}
The penultimate line comes from the fact that $r < \frac{1}{2} < e^{(e^{-1}-1)}$, and thus
$$1 + \frac{1}{|\log(r)|e^m} < e$$
for all $m \geq 1$. Moreover, $g(s) \le \frac{c_{0}}{s}$ with $c_{0} = r^{e}$.
Define $s_{N} = g^{-1} \left( \frac{1}{\alpha_{1}N} \right)$. That is,
$$
r^{e^{s_{N}}} = \frac{1}{\alpha_{1}N} \iff \log(r) e^{s_{N}} = - \log(\alpha_{1} N) \iff s_{N} = \log \left( \frac{\log(\alpha_{1}N)}{|\log(r)|} \right) \le \log \left( \frac{\log(N)}{|\log(r)|} \right).
$$
Applying Theorem \ref{t:infdisc} verifies \eqref{e:x2} with $c_{r} = (6 r^{e} +3)$.
\end{example}

The last example is intended to demonstrate that this technique of approximating discrete measures in general cannot be the optimal way to do so. In this example, it only yields something equivalent to the trivial bound for total variation distance. One reason to expect this method is not sharp is that it completely ignores the location of the points, which should be very important especially in higher dimensions.
\begin{example} \label{ex:3} Let $r_k = 1/k^2$. Then the same method only guarantees that there exists a set $(x_{i}^{N})_{i=1}^{N}$ so that the associated $\nu_{N}$ as in \eqref{e:nun} satisfies
\begin{equation} \label{e:x3}
\rho \left( \mu; \nu_{N} \right) \le 9.
\end{equation}
Indeed,
$$\sum_{k=m+1}^\infty \frac{1}{k^2} \leq \int_{m}^\infty \frac{1}{s^2} \mathrm{d}s = \frac{1}{m} =:g(m).$$
Then $K_N = g^{-1}((\alpha_1N)^{-1}) = \alpha_1 N \le N$. Applying Theorem \ref{t:infdisc} gives the bound in \eqref{e:x3}.
\end{example}

\section{Discrepancy in one dimension} \label{sec:1d}
For the $d$-dimensional Lebesgue measure $\lambda_d$ it is conjectured that there exists a constant $c_d$ dependent only on the dimension such that for every finite atomic measure $\nu_N$ centered at $N$ points, $x_1,\ldots,x_N$, the inequality
$$D^*_{N}(\lambda_d; \nu_{N}) \geq c_d \frac{(\log N)^{d-1}}{N}$$
holds infinitely often. In other words, the optimal order of approximation of the Lebesgue measure by a finite atomic measure is conjectured to be $N^{-1}(\log N)^{d-1}$. In fact, this is standard knowledge for dimensions one \cite{Nie92}. In dimension two, this is known by the work of Schmidt, \cite{Sch72}. 

Sets that achieve the optimal order of approximation are called low-discrepancy point sets. Similarly, sequences with the conjectured optimal order of convergence $N^{-1}(\log N)^d$ are called low-discrepancy sequences (for the Lebesgue measure). There are essentially three classical families of low-discrepancy sequences for the one-dimensional Lebesgue measure: Kronecker sequences, digital sequences, and Halton sequences. In dimension one, further classes of examples have recently been found, e.g. \cite{Car12}, \cite{Wei19}. A discussion of the multi-dimensional picture can be found in \cite{Nie92}. 

For arbitrary normalized Borel measures $\mu$, Theorem~\ref{thm:ABN} yields the best known order of approximation. If the measure $\mu$ has a non-vanishing continuous component then the method of Roth \cite{Rot54}, over the orthogonal functions can be applied in order to find lower bounds for the star-discrepancy. This has been conducted in \cite{Che85}. The lower bounds which are obtained in this way are the same as those for the Lebesgue measure. This can be regarded as another indication that the Lebesgue measure is particularly hard to approximate by finite sets, and that the results of Theorem~\ref{thm:ABN} should be able to be improved to show the conjecture that Lebesgue measure is indeed the most difficult measure to approximate by finite sets.

Though more is not known for arbitrary Borel measures in any dimension, we do know that the bounds in Theorem~\ref{thm:ABN} are not optimal when $d= 1$. The reason we have better bounds when $d=1$ is that the ordering of $[0,1]$ is equivalent to the ordering (by inclusion) of axis-parallel boxes in $[0,1]$ which contain the origin. Hence this case can be treated by generalizing the arguments given in \cite{HM72a}, \cite{HM72b}.\footnote{Hlawka and M\"uck concentrated on deriving inequalities of Koksma-Hlawka type in their papers and hence did not work out the details regarding approximation of measures. Moreover they made a Lipschitz continuity assumption.}

\begin{remark} \label{r:ldt}
	Recall that in dimension $d=1$, Lebesgue's decomposition theorem states any Borel measure $\mu$ can be written as 
	$$\mu = \mu_{ac} + \mu_{d} + \mu_{cs},$$
	where $\mu_{ac}$ is absolutely continuous with respect to the Lebesgue measure, that is $\mu_{ac}$ is zero on sets of Lebesgue measure zero, $\mu_d$ is a discrete measure, that is, it is zero on the complement of some countable set, and $\mu_{cs}$ is continuous singular, that is, $\mu_{cs}$ is zero on the complement of some set $B$ of Lesbesgue measure zero but assigns no weight to any countable set of points. For more details we refer the reader, e.g. to \cite{HS75}, Chapter V. 
\end{remark}

\begin{theorem} \label{thm:01}
	Fix $\mu$ a normalized Borel measure on $[0,1]$. 
	\begin{enumerate}
		\item For all $N\in \N$, there exists a finite set $(x_i)_{i=1}^N$ such that $\nu_N$ as in \eqref{e:nun} satisfies 
		$$D^*_{N}\left(\mu; \nu_N \right) \leq \frac{c}{N}$$
		with $c$ a constant independent of $N$.
		
		\item Moreover, there exists a sequence $(x_{k})_{k \in \N} \in [0,1]$ such that for all $N \in \mathbb{N}$ and $\nu_N$ as in \eqref{e:nun} we have
		$$D^*_{N}\left( \mu; \nu_N \right) \leq c \frac{\log(N)}{N}$$
		with a constant $c$ independent of $N$.
		
		\item Suppose $\mu$ is a Borel measure with no point masses. That is, $\mu = \mu_{ac} + \mu_{cs}$. Then \begin{equation}\label{eq:lowerbound}
		D_N^*\left(\mu ; \nu_N \right) \geq \frac{1}{2N}
		\end{equation}
		for any finite set $(x_i)_{i=1}^N$. Moreover, there exists a constant $c$ so that for infinitely many $N\in \N$, 
		\begin{equation}\label{eq:schmidt} D^*_{N}\left( \mu; \nu_N \right) \geq c \frac{\log N}{N}
		\end{equation}
		holds for any sequence $(x_k)_{k\in \N}$ with $\nu_N$ defined as in \eqref{e:nun}.
	\end{enumerate}
\end{theorem}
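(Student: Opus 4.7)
The unifying tool across all three parts is the quantile transform. Let $F(x) = \mu([0,x])$ with generalized inverse $F^{-1}(u) = \inf\{x : F(x) \geq u\}$, and write $F_-(x) = \mu([0,x))$ for its left-continuous variant. The feature that makes dimension one tractable is that the anchored half-open boxes $\{[0,a) : a \in [0,1]\}$ form a totally ordered chain, so
\[
D^*_N(\mu; \nu_N) \;=\; \sup_{a \in [0,1]} \bigl| F_-(a) - \tfrac{1}{N}\#\{k \leq N : x_k < a\} \bigr|
\]
is a Kolmogorov-type distance between $F$ and the empirical CDF. For part (1), the adjunction $F^{-1}(u) \leq a \iff u \leq F(a)$ ensures that the choice $x_i \defeq F^{-1}\!\bigl(\tfrac{2i-1}{2N}\bigr)$, $i = 1, \ldots, N$, makes the jumps of the empirical CDF straddle $F$ at spacings of $1/N$, giving $D^*_N(\mu; \nu_N) \leq 1/(2N)$; atoms of $\mu$ cause several consecutive $x_i$ to collapse onto the atom, exactly reproducing the point mass. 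For part (2), take a classical one-dimensional low-discrepancy sequence for the Lebesgue measure---say the van der Corput sequence $(v_k)_{k \in \N}$, with $D^*_N(\lambda; v_1, \ldots, v_N) \leq c' \log(N)/N$---and transport it by the quantile, $x_k \defeq F^{-1}(v_k)$. The adjunction identifies $\#\{k \leq N : x_k < a\}$ with $\#\{k \leq N : v_k \leq F_-(a)\}$, which upon comparison with the empirical CDF of $(v_k)$ propagates the $\log(N)/N$ rate from $\lambda$ to $\mu$.

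Part (3) relies on atomlessness, which makes $F$ continuous. For the $\tfrac{1}{2N}$ bound, order the points $x_{(1)} \leq \cdots \leq x_{(N)}$ and note that the empirical CDF jumps by $1/N$ at each $x_{(i)}$ while $F$ is continuous through $F(x_{(i)})$. Since $(i-1)/N$ and $i/N$ differ by $1/N$, at least one of $|F(x_{(i)}) - (i-1)/N|$ and $|F(x_{(i)}) - i/N|$ is at least $\tfrac{1}{2N}$, and both values are realized---either at $a = x_{(i)}$ or in the limit $a \to x_{(i)}^+$---by honest half-open anchored boxes. For the Schmidt-type bound \eqref{eq:schmidt}, we transfer back to Lebesgue: given any sequence $(x_k)$, set $u_k \defeq F(x_k) \in [0,1]$. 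Because $F$ is continuous with $F(0)=0$ and $F(1)=1$, hence surjective onto $[0,1]$, for every $b \in [0,1]$ there exists $a \in [0,1]$ with $F(a) = b$ and $\{x : F(x) < b\} = [0, a)$; consequently $\#\{k \leq N : u_k < b\} = \#\{k \leq N : x_k < a\}$. This yields $D^*_N(\lambda; u_1, \ldots, u_N) \leq D^*_N(\mu; \nu_N)$, and Schmidt's classical one-dimensional lower bound for the Lebesgue measure transfers verbatim.

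The main technical obstacle throughout is the one-sided bookkeeping required to run these arguments with no regularity assumption on $\mu$: atoms of $\mu$ (in parts (1) and (2)) force $F$ to jump and multiple $x_i$ to coincide, while plateaus of $F$ (which arise whenever $\mu_{cs} \neq 0$, relevant in part (3)) cause $F^{-1}$ to lose injectivity. Selecting the correct left- versus right-continuous version of $F$ and of $F^{-1}$ at each step---so that half-open anchored boxes match correctly under quantile transport and no $O(1/N)$ off-by-one losses accumulate---is precisely the finesse that Hlawka and M\"uck bypassed in \cite{HM72a, HM72b} with a Lipschitz assumption; executing it for arbitrary Borel $\mu$ is the heart of the proof.
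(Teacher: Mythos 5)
Your proposal is correct and takes essentially the same route as the paper: both transport the problem to the Lebesgue case via the (generalized inverse of the) cumulative distribution function and then invoke the classical one-dimensional facts (the points $\tfrac{2i-1}{2N}$, a low-discrepancy sequence, the $\tfrac{1}{2N}$ bound, and Schmidt's theorem). The differences are cosmetic: you work with the right-continuous CDF $F(x)=\mu([0,x])$ and its quantile inverse where the paper uses $f(a)=\mu([0,a))$ with a maximal-preimage choice of $x_k$, and your explicit pushforward $u_k=F(x_k)$ in part (3) simply spells out the transfer that the paper obtains from the equality case of its Lemma~\ref{lem:d=1}.
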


\begin{remark} \label{r:schmidt}
	Part (3) of Theorem~\ref{thm:01}, combined with (1) and (2) states that in the case where $\mu$ has no point masses, the optimal rate of convergence is $O(N^{-1})$ and $O(N^{-1}\log(N))$ for sets and sequences respectively. Note that when $\mu = \lambda_{1}$ is the Lebesgue measure, it was already known this is the best rate for convergence for point sets \cite[Theorem~2.6]{Nie92} and sequences \cite{Sch72}.
\end{remark}

In order to prove Theorem~\ref{thm:01}, we will make use of the following lemma which constructs finite sets and/or sequences for which one can compare the discrepancy with respect to Lebesgue and to an arbitrary Borel measure.
\begin{lemma}\label{lem:d=1}
	Let $\mu$ be a probability measure on $[0,1]$. If $(v_k)_{k= 1}^{M} \subseteq [0,1]$ is a finite or countably infinite set of points, then there exists $(x_{k})_{k = 1}^{M}$ so that for all $2 \le N \le M$ and $\nu_N$ as in \ref{e:nun},
	\begin{equation}\label{eq:comparediscrepancy} D_N^*\left( \mu; \nu_N\right) \leq D_N^*\left( \lambda_1; \frac{1}{N}\sum_{i=1}^N \delta_{v_i}\right).
	\end{equation}
Moreover, if $\mu$ has no point masses, that is $\mu_{d}([0,1]) = 0$ (see Remark \ref{r:ldt}), then in Equation~\ref{eq:comparediscrepancy} we in fact have equality.
\end{lemma}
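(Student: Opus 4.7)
The plan is to use the quantile function of $\mu$ as a transport map. Define $F(t) \defeq \mu([0,t])$ (which is non-decreasing and right-continuous), introduce its generalized inverse $F^{-1}(u) \defeq \inf\{t \in [0,1] : F(t) \geq u\}$ for $u \in (0,1]$ (with $F^{-1}(0) \defeq 0$), and set $x_k \defeq F^{-1}(v_k)$ for every $k \leq M$. Because $F^{-1}$ is monotone, each initial segment $(x_k)_{k=1}^N$ is just the $F^{-1}$-image of the corresponding initial segment $(v_k)_{k=1}^N$, so the construction is compatible with every $N$ simultaneously.

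The key tool is the classical identity $F^{-1}(u) \leq a \iff u \leq F(a)$, which is immediate from right-continuity and monotonicity of $F$. Applying it to each $v_k$ gives $\#\{k \leq N : x_k \leq a\} = \#\{k \leq N : v_k \leq F(a)\}$, so for every $a \in [0,1]$, setting $b \defeq F(a) \in [0,1]$,
\begin{align*}
|\mu([0,a]) - \nu_N([0,a])| &= \Bigl| F(a) - \tfrac{1}{N}\#\{k \leq N : v_k \leq F(a)\} \Bigr| \\
&= \Bigl| \lambda_1([0,b]) - \tfrac{1}{N}\#\{k \leq N : v_k \leq b\} \Bigr| \\
&\leq D_N^*\bigl(\lambda_1;\ \tfrac{1}{N}\sum\nolimits_{i=1}^{N} \delta_{v_i}\bigr).
\end{align*}
Taking the supremum over $a \in [0,1]$ produces \eqref{eq:comparediscrepancy}.

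For the equality statement, if $\mu$ has no point masses then $F$ is continuous on $[0,1]$ with $F(0)=0$ and $F(1)=1$, so by the intermediate value theorem $F:[0,1] \to [0,1]$ is surjective. Consequently every $b \in [0,1]$ admits a preimage $a$ with $F(a)=b$, and the chain of equalities above can be run in reverse to yield the matching lower bound $D_N^*(\lambda_1;\ \tfrac{1}{N}\sum_{i=1}^N \delta_{v_i}) \leq D_N^*(\mu;\nu_N)$; combined with the previous direction this forces equality in \eqref{eq:comparediscrepancy}.

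The main technical obstacle is reconciling the argument above, which is cleanest for closed boxes $[0,a]$, with the paper's convention of \emph{half-open} boxes $[0,a)$: when $F$ has jumps (point masses of $\mu$) or constancy plateaus (gaps in the support of $\mu$), the identification $\{x_k < a\} \leftrightarrow \{v_k \leq F(a-)\}$ acquires attainment conditions at those break points. I would resolve this either by noting that in one dimension $\sup_a|\mu([0,a)) - \nu_N([0,a))|$ and $\sup_a|\mu([0,a]) - \nu_N([0,a])|$ coincide (both integrands are piecewise constant with finitely many breakpoints in any compact range, and a routine side-limit argument equates the two suprema), or by repeating the quantile identity with the half-open analogue $F^{-1}(u) < a \iff u < F(a-)$ (up to plateau attainment) and absorbing the boundary contributions into the supremum on the right-hand side.
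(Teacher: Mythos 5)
Your proof is correct, and at heart it is the same argument as the paper's: both transport the reference set $(v_k)$ through a generalized inverse of the distribution function of $\mu$. The differences are in the conventions and in where the half-open boxes are dealt with. The paper works with the left-continuous function $f(a)=\mu([0,a))$, defines $x_k$ by \eqref{e:01} (i.e.\ $x_k=\sup\{a: f(a)\le v_k\}$), and handles plateaus of $f$ head-on via the point $b_0=\max f^{-1}(f(b))$ (its Claims 1 and 2), so that the half-open local discrepancies match exactly with no further reduction. You instead use the right-continuous CDF $F(t)=\mu([0,t])$, the quantile identity $F^{-1}(u)\le a \iff u\le F(a)$, and closed boxes, which makes the matching of local discrepancies immediate but forces the closed-versus-half-open reconciliation at the end. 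That reduction is indeed routine and your side-limit suggestion closes it: for any two probability measures on $[0,1]$ the suprema over $[0,a]$ and over $[0,a)$ coincide, because $\mu([0,a])-\nu_N([0,a])$ is the limit of $\mu([0,a'))-\nu_N([0,a'))$ as $a'\downarrow a$ (continuity of measures along monotone sequences), the closed value at $a=1$ and the half-open value at $a=0$ are both zero, and conversely each half-open value is a left limit of closed values. One caveat: your alternative justification that ``both integrands are piecewise constant with finitely many breakpoints'' is not accurate in general (only the $\nu_N$-part is piecewise constant; $a\mapsto\mu([0,a])$ can be continuous and strictly increasing), so you should rely on the side-limit argument. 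The equality statement for atomless $\mu$ is handled the same way in both proofs, via continuity and surjectivity of the distribution function, and your construction $x_k=F^{-1}(v_k)$ correctly yields one set working for all $N\le M$ simultaneously.
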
 
The strategy of the proof is to use a cumulative distribution type function for $\mu$ to create an injective function from $\cA \to \cA$ (recall $\cA$ is the set of half-open intervals contained in $[0,1]$ containing $0$) that can be used to pull back a set or sequence $(v_{k})$ with given Lebesgue discrepancy to find a set or sequence $(x_{k})$ that has the same $\mu$ discrepancy. When $\mu_{d}$ is zero, the distribution function creates a bijection not just an injection. This fails in higher-dimensions because axis parallel boxes do not have a well-ordering that also respects the geometry. Namely different boxes can see different points in a different order.

\begin{proof}[Proof of Lemma~\ref{lem:d=1}] Note that the function
$$
f(a) \defeq \mu \left( [ 0,a) \right)
$$
is non-decreasing and continuous from the left.

Let $(v_{k})_{k \geq 1}$ be an arbitrary finite or infinite set. For each $k \geq 1$ define $x_{k}$ so that
\begin{equation} \label{e:01}
\begin{cases}
f(a) \le v_{k} \qquad \forall a \le x_{k} \\
f(a) > v_{k} \qquad \forall a > x_{k}.
\end{cases}
\end{equation}
This can be done since $f$ is non-decreasing and continuous from the left. Fix $N \in \N_{\geq 2}$ and an interval $[0, b)$.

Consider $c \defeq f(b)$. Due to the monotonicity and one-sided continuity of $f$ one of the following holds:
$$
f^{-1} \left( \{ c \} \right) = \begin{cases} \{b\} \\ (b - \delta_{1}, b + \delta_{2}]  \qquad \text{for some } \delta_{1}, \delta_{2} \ge 0. \end{cases}
$$
The first case occurs when $f$ is strictly increasing at $b$, and the second case occurs if $f$ is constant on a sub-interval containing $b$. In either case, define $b_{0} = \max \{ f^{-1} \left( f(b)\right) \}$. 

\textit{Claim 1:}  The discrepancy of $(x_k)_{k=1}^N$ with respect to $\mu$ over the interval $[0,b_0)$ is the same as the Lebesgue discrepancy of $(v_k)_{k=1}^N$ over $[0,c)$. That is
\begin{equation} \label{e:02}
\left| \frac{\# \{ x_{k} < b_{0} : k \le N \} }{N} - \mu([0,b_{0})) \right| = \left| \frac{ \# \{ v_{k} < c : k \le N \}}{N} - \lambda_1( [0, c)) \right|.
\end{equation}
Indeed, since $f(b_{0}) = c = \lambda_1([0, c))$ we have
$\mu([0,b_{0})) = \lambda_1([0,c)).$ Moreover, if $x_{k} < b_{0}$ then $f(b_{0}) > v_{k}$ by \eqref{e:01}. In particular, $v_{k} < c$. On the other hand, if $v_{k} < c$ then
$$
f(x_{k}) \le v_{k} < c = f(b_{0}),
$$
so the non-decreasing nature of $f$ combined with the strict inequality forces $x_{k} < b_{0}$. This verifies Claim 1.

\textit{Claim 2:} The discrepancy of $(x_k)_{k=1}^N$ for $\mu$ over the intervals $[0,b)$ and $[0,b_0)$ are equal. That is
\begin{equation} \label{e:03}
\left| \frac{\# \{ x_{k} < b_{0} : k \le N \} }{N} - \mu([0,b_{0})) \right|  = \left| \frac{\# \{ x_{k} < b : k \le N \} }{N} - \mu([0,b)) \right|.
\end{equation}
Indeed, if $b= b_0$, this statement is trivial. If $b_0 > b$, then we still have $f(b_0) = f(b)$, so we only need to show that $(x_k)_{k\leq N} \cap [b, b_0) = \emptyset$. Since $f$ is constant on the interval $[b,b_0)$, by $\eqref{e:01}$, either $x_k = b_0$ which is outside $[b,b_0)$, or $x_k \leq b- \delta_1$, so $(x_{k})_{k \le N} \cap [b, b_{0}) = \emptyset $ as desired.

Claim $1$ and Claim $2$ ensure that for every interval $[0,b)$ associated to $\mu$, the interval $[0,f(b))$ has the same Lebesgue discrepancy. Taking the supremum over all $b$ ensures
\begin{equation} \label{e:1}
D_{N}^{*}\left( \mu; \nu_N\right) = \sup_{b}  \left|\sum_{i=1}^N \frac{ \eye_{[0,f(b))}(v_{i})}{N} - \lambda_{1}([0,f(b))  \right| \le D_{N}^{*} \left(\lambda_1 ; \frac{1}{N} \sum_{i=1}^{N} \delta_{v_{i}} \right),
\end{equation}
verifying \eqref{eq:comparediscrepancy}.

When $\mu = \mu_{ac} + \mu_{sc}$ has no discrete part, then $f$ is in fact continuous. Therefore $f$ maps onto $[0,1)$. Hence taking the supremum over $[0,f(b))$ is now equivalent to taking the supremum over $[0,c)$ forcing an equality in \eqref{e:1}. 
\end{proof}

We now utilize Lemma~\ref{lem:d=1} to prove Theorem~\ref{thm:01}. 
\begin{proof}[Proof of Theorem~\ref{thm:01}]
\begin{enumerate}
	\item For $k = 1,\ldots, N$, set $v_k = \frac{2k-1}{2N}$. Let $x_{k}$ be defined as in \eqref{e:01}.  Then by Theorem~2.7 of \cite{Nie92} combined with Lemma~\ref{lem:d=1},
	$$ D_{N}^{*}\left(\mu; \nu_N \right) \le D_{N}^{*}\left(\lambda_1; \frac{1}{N}\sum_{i=1}^N \delta_{v_i} \right)  = \frac{1}{N}.$$
	\item Now let $(v_k)_{k\in \N}$ be any low-discrepancy sequence with respect to Lebesgue measure and let $(x_{k})_{k \in \N}$ be defined as in \eqref{e:01}. Then by Lemma~\ref{lem:d=1}, 
	$$
D_{N}^{*}\left( \mu ; \nu_N \right) \le D_{N}^{*}\left(\lambda_1 ;\frac{1}{N}\sum_{i=1}^N \delta_{v_i} \right)\leq c \frac{\log(N)}{N}.
$$
	\item Finally, suppose $\mu = \mu_{ac} + \mu_{sc}$. Combining Lemma~\ref{lem:d=1} with the results of Schmidt and Niederreiter (see Remark \ref{r:schmidt}) verifies \eqref{eq:lowerbound} and \eqref{eq:schmidt}.
\end{enumerate}
\end{proof}
Thus for an arbitrary probability measure $\mu$ on $[0,1]$, there exist sequences whose $\mu$ discrepancy converges to zero at least as fast as in the Lebesgue case. Moreover when $\mu$ has no discrete part, the rate from the Lebesgue measure is in fact optimal. The following example shows that in the discrete case there exist sequences whose discrepancy converge strictly faster than in the Lebesgue case.

\begin{example}\label{ex:1/2measure} Let $\delta_{y}$ denote the Dirac measure centered at $y$ and consider the Borel measure $\mu := \frac{1}{2} \delta_{0} + \frac{1}{2} \delta_{0.5}$. Then the sequence $(x_{k})_{k \in \N}$ defined by $x_{2k+1} = 0$ and $x_{2k} = 1/2$ has 
	\begin{align*}
	D_N^*\left( \mu; \nu_N \right) = \begin{cases}  \frac{1}{2N} & N \ \textrm{odd} \\ 0 & N \ \textrm{even}. \end{cases}
	\end{align*}
\end{example}

\bibliographystyle{alpha}
\bibdata{references}
\bibliography{references}
	
\end{document}